\documentclass[12pt,a4paper]{article}

\usepackage[T1]{fontenc}
\usepackage[bitstream-charter]{mathdesign}

\usepackage[latin1]{inputenc}							
\usepackage{amsmath}	
\usepackage{latexsym,xypic}								
\usepackage{geometry}
\usepackage{longtable}
\usepackage{array}
\usepackage{fancyvrb,shortvrb}

\usepackage{xcolor}
\definecolor{bl}{rgb}{0.0,0.2,0.6}

\usepackage{sectsty}
\usepackage[compact]{titlesec}
\allsectionsfont{\color{bl}\scshape\selectfont}

\makeatletter							
\def\printtitle{
    {\color{bl} \centering \huge \sc \textbf{\@title}\par}}		
\makeatother							
\def \la {\l_{\scriptsize{\vdash}}}
\def \lb {\l_{\scriptsize{\dashv}}}
\def \ra {\r_{\scriptsize{\vdash}}}
\def \rb {\r_{\scriptsize{\dashv}}}
\def \sol {\dashv}
\def \sag {\vdash}
\def \k {\mathbb{K}}

\def \r {\vartriangleleft}
\def \l {\vartriangleright}

\title{On Crossed Modules in Modified Categories of Interest
\vspace*{10pt}}

\makeatletter							
\def\printauthor{
    {\color{bl} \centering \large \sc \textbf{\@author}\par}}				
\makeatother							

\author{A.F. ASLAN, S. ÇET\.{I}N and  E. Ö.
USLU 	\vspace*{10pt}}

\usepackage{fancyhdr}
	\pagestyle{fancy}					
\usepackage{lastpage}	
	\lhead{}
	\chead{}
	\rhead{}
	\lfoot{}
	\cfoot{}
	\rfoot{\footnotesize page \thepage\ of \pageref{LastPage}}	

\usepackage[runin]{abstract}		
\setlength\absleftindent{30pt}		
\setlength\absrightindent{30pt}		
\abslabeldelim{\quad}						%
\setlength{\abstitleskip}{-10pt}


\newtheorem{theorem}{Theorem}
\newtheorem{definition}[theorem]{Definition}
\newtheorem{example}[theorem]{Example}

\newtheorem{corollary}[theorem]{Corollary}

\newtheorem{remark}[theorem]{Remark}
\newtheorem{notation}[theorem]{Notation}

\newtheorem{proposition}[theorem]{Proposition}

\newenvironment{proof}[1][Proof]{\noindent\textbf{#1.} }{\ \rule{0.5em}{0.5em}}


\begin{document}
\printtitle

\vspace{1cm}

\printauthor

\bigskip

\textbf{Address:} Department of Mathematics and Computer Sciences, Eskisehir Osmangazi University, Art and
Science Faculty, Eskisehir, Turkey.

\bigskip

\textbf{e-mail addresses:} afaslan@ogu.edu.tr, selimc@ogu.edu.tr, enveruslu@ogu.edu.tr.

\bigskip
\vspace{1cm}

\textbf{Abstract:} We introduce some algebraic structures such as singularity, commutators and central extension in modified categories of interest. Additionally, we introduce the cat$^{1}$-objects with their connection to crossed modules in these categories which gives rise to unify many notions about (pre)crossed modules in various algebras of categories.

\bigskip

\textbf{Keywords:} Center, Central Extension, Commutator, Singularity.

\bigskip


\section{Introduction}
Categories of interest were introduced in order to study properties of
different algebraic categories and different algebras simultaneously. The idea
comes from P. G. Higgins \cite{Hig} and the definition is due to M. Barr and
G. Orzech \cite{Orz}. Many categories of algebraic structures are main
examples of these categories (see \cite{ CDL4, CDL5, Orz,Lo3, DA, LoRo}).The
categories of crossed modules and precrossed modules in the category of
groups, respectively, are equivalent to categories of interest as well, in the
sense of \cite{CDL2,JDL1}. Nevertheless, the cat$^{1}$-Lie (associative,
Leibniz, etc.) algebras are not categories of interest. Consequently, in
\cite{BCDU}, Y. Boyac\i\ et al. introduce and study a new type of category of
interest; namely, a category which satisfies all axioms of a category of
groups with operations stated in \cite{Por} except one, which is replaced by a
new axiom; this category satisfies as well two additional axioms introduced in
\cite{Orz} for categories of interest. They called this category as ''Modified
category of Interest'' which will be denoted MCI from now on. The examples are mainly those categories, which are
equivalent to the categories of crossed modules and precrossed modules in the
categories of Lie algebras, Leibniz algebras, associative and associative
commutative algebras . See \cite{bau,jas,CMDA,CC, Ajas,
	XLR,Ljas,DAT,dede,LL,lue,Por}, for more examples.

Crossed modules were introduced by J.H.C Whitehead in \cite{W1} as a model of
homotopy 3-types and used to classify higher dimensional
cohomology groups in \cite{W2}. Since then, whole property adapted to many
algebras. The notions of crossed modules were defined on various algebras such
as, (associative) commutative algebras, Lie algebras, Leibniz algebras,
Lie-Rinehart algebras in \cite{bau,jas,CMDA,CC, Ajas,
	XLR,Ljas,DAT,dede,LL,lue,Por}. The definition of crossed modules in modified
categories of interest unifies all these definition.  As a different model of
homotopy types, Loday defined cat$^{1}$-groups in \cite{Lswf}. The category of
cat$^{1}$-groups and crossed modules are naturally equivalent and this result
was adapted to many algebras, as well. The notions of cat$^{1}$-algebras were
introduced in \cite{ellis}.

In this work our main purpose is to unify the notions of center, singularity,
commutator and central extensions in various categories of (pre)crossed
modules (See \cite{LCE,BCDU,CC,Nor}). For this, first we introduce the
notions center, singularity and central extensions in modified categories of
interest. Inspiring from the equivalence between the categories of
(pre)cat$^{1}$-groups and (pre)crossed modules, we introduce the notion of
(pre)cat$^{1}$-objects and their connection to crossed modules in modified
categories of interest. Then applying those definitions to (pre)cat$^{1}%
$-objects, we get unification of many notions related to (pre)crossed modules
in different types of categories. Additionally, we show that our definitions
coincide with those given in \cite{everart,gj3,hug1}.

The outline of the paper is as follows: In section $2$, we recall the notion
of MCI and some related structures with basic properties. In section $3$, we
introduce the notion of (pre)cat$^{1}$-object in an arbitrary modified category of
interest $\mathbb{C}$ with its connection to crossed modules in $\mathbb{C}$.
Then we introduce the singularity, commutators and central extensions in MCI.
In section $4$, as an application of section $3$ we get the (pre)crossed
module version of the introduced notions.

\textbf{Acknowledgement : }We would like to thank T. Datuashvili for
valuable comments and suggestions while her visit to Eskisehir Osmangazi
University supported by TÜB\.{I}TAK grand 2221 konuk veya akademik izinli
bilim insan\i\ destekleme program\i .

\bigskip
\section{Preliminaries}

We will recall the notions of MCI and main constructions from \cite{BCDU} which are modified versions of those given in \cite{JDL1, DAT, Orz}.

Let $\mathbb{C}$ be a category of groups with a set of operations $\Omega$ and
with a set of identities $\mathbb{E}$, such that $\mathbb{E}$ includes the
group identities and the following conditions hold. If $\Omega_{i}$ is the set
of $i$-ary operations in $\Omega$, then:

\begin{enumerate}
	\item[(a)] $\Omega=\Omega_{0}\cup\Omega_{1}\cup\Omega_{2}$;
	
	\item[(b)] the group operations (written additively : $0,-,+$) are elements of
	$\Omega_{0}$, $\Omega_{1}$ and $\Omega_{2}$ respectively. Let $\Omega
	_{2}^{\prime}=\Omega_{2}\setminus\{+\}$, $\Omega_{1}^{\prime}=\Omega
	_{1}\setminus\{-\}.$ Assume that if $\ast\in\Omega_{2}$, then $\Omega
	_{2}^{\prime}$ contains $\ast^{\circ}$ defined by $x\ast^{\circ}y=y\ast x$ and
	assume $\Omega_{0}=\{0\}$;
	
	\item[(c)] for each $\ast\in\Omega_{2}^{\prime}$, $\mathbb{E}$ includes the
	identity $x\ast(y+z)=x\ast y+x\ast z$;
	
	\item[(d)] for each $\omega\in\Omega_{1}^{\prime}$ and $\ast\in\Omega
	_{2}^{\prime}$, $\mathbb{E}$ includes the identities $\omega(x+y)=\omega
	(x)+\omega(y)$ and $\omega(x\ast y)=\omega(x)\ast\omega(y)$.
\end{enumerate}

Let $C$ be an object of $\mathbb{C}$ and $x_{1},x_{2},x_{3}\in C$:\newline

\noindent\textbf{Axiom 1.} $x_{1}+(x_{2}\ast x_{3})=(x_{2}\ast x_{3})+x_{1}$,
for each $\ast\in\Omega_{2}^{\prime}$.\newline

\noindent\textbf{Axiom 2.} For each ordered pair $(\ast,\overline{\ast}%
)\in\Omega_{2}^{\prime}\times\Omega_{2}^{\prime}$ there is a word $W$ such
that
\begin{gather*}
	(x_{1}\ast x_{2})\overline{\ast}x_{3}=W(x_{1}(x_{2}x_{3}),x_{1}(x_{3}
	x_{2}),(x_{2}x_{3})x_{1},\\
	(x_{3}x_{2})x_{1},x_{2}(x_{1}x_{3}),x_{2}(x_{3}x_{1}),(x_{1}x_{3})x_{2}
	,(x_{3}x_{1})x_{2}),
\end{gather*}
where each juxtaposition represents an operation in $\Omega_{2}^{\prime}$.
\bigskip

\begin{definition}\cite{BCDU}
	\label{catint2}A category of groups with operations $\mathbb{C}$ satisfying
	conditions $(a)-(d)$, Axiom 1 and Axiom 2, is called a \textit{ modified
		category of interest. }
\end{definition}

The difference of this definition from the
original one of category of interest is the identity $\omega(x)\ast
\omega(y)=\omega(x\ast y),$ which is $\omega(x)\ast y=\omega(x\ast y)$ in the
definition of category of interest.

\begin{example}
	The categories \ $\mathbf{Cat}^{\mathbf{1}}$-$\mathbf{Ass}, \ \mathbf{Cat}
	^{\mathbf{1}}$-$\mathbf{Lie}, \ \mathbf{Cat}^{\mathbf{1}}$-$\mathbf{Leibniz}%
	,\ \mathbf{PreCat}^{\mathbf{1}}$-$\mathbf{Ass}, \ $ $\mathbf{PreCat}%
	^{\mathbf{1}}$-$\mathbf{Lie}$ and $\mathbf{PreCat}^{\mathbf{1}}$%
	-$\mathbf{Leibniz}$ are modified categories of interest, which are not
	categories of interest. Also the category of commutative Von Neumann regular
	rings is isomorphic to the category of commutative rings with a unary
	operation $(\ )^{\ast}$ satisfying two axioms, defined in \cite{jan}, which is
	a modified category of interest.
\end{example}

\begin{notation}
From now on, $\mathbb{C}$ will denote an arbitrary modified category of interest.
\end{notation}
	Let $B\in\mathbb{C}$. A subobject of $B$ is called an ideal if it is the
	kernel of some morphism. Then $A$ is an
	ideal of $B$ if and only if  $A$ is a normal subgroup of $B$ and $a\ast b\in A,$ for all $a\in A$, $b\in B$ and $\ast \in\Omega_{2}^{\prime}$.

For $A,B\in\mathbb{C}$ we say that we have a set of actions of $B$ on
$A$, whenever there is a map $f_{\ast}:A\times B\longrightarrow A$, for each
$\ast\in\Omega_{2}$. A split extension of $B$ by $A,$ induces an
action of $B$ on $A$ corresponding to the operations in $\mathbb{C}$. For a
given\ split extension
\[%
\begin{array}
[c]{r}%
\xymatrix{0\ar[r]&A\ar[r]^-{i}&E\ar[r]^-{p}&B\ar[r]&0},
\end{array}
\]
we have
\[%
\begin{array}
[c]{r}%
b\cdot a=s(b)+a-s(b),
\end{array}
\]%
\[%
\begin{array}
[c]{r}%
b\ast a=s(b)\ast a,
\end{array}
\]
for all $b\in B$, $a\in A$ and $\ast\in\Omega_{2}{}^{\prime}.$ Actions defined
by the previous equations are called derived actions of $B$ on $A$.

Given an action of $B$ on $A,$ a semi-direct product $A\rtimes B$ is a
universal algebra, whose underlying set is $A\times B$ and the operations are
defined by
\[%
\begin{array}
[c]{rcl}%
\omega(a,b) & = & (\omega\left(  a\right)  ,\omega\left(  b\right)  ),\\
(a^{\prime},b^{\prime})+(a,b) & = & (a^{\prime}+b^{\prime}\cdot a,b^{\prime
}+b),\\
(a^{\prime},b^{\prime})\ast(a,b) & = & (a^{\prime}\ast a+a^{\prime}\ast
b+b^{\prime}\ast a,b^{\prime}\ast b),
\end{array}
\]
for all $a,a^{\prime}\in A,$ $b,b^{\prime}\in B.$ See \cite{BCDU}, for details.
\begin{example}
	A dialgebra (or diassociative algebra) over a field $\k$, introduced in \cite{DA} is a $\k$-vector space defined with two $\k$-linear maps:
	\begin{align*}
	\sol \, , \, \sag \colon A \otimes A \to A
	\end{align*}
	such that
	\begin{align*}
	(x \sol y) \sol z & = x \sol (y \sag z),  \\
	(x \sol y) \sol z & = x \sol (y \sol z),   \\
	(x \sag y) \sol z & = x \sag (y \sol z),  \\
	(x \sol y) \sag z & = x \sag (y \sag z), \\
	(x \sag y) \sag z & = x \sag (y \sag z) ,
	\end{align*}
for all $x,y,z \in A$.

	Let $A$ and $B$ be two dialgebras. A dialgebra action of $B$ on $A$  is defined with four bilinear maps:
	\begin{align*}
		\la \, , \, \lb \colon B \times A \to A \\
		\ra \, , \, \rb \colon A \times B \to A
	\end{align*}
satisfying the required $30$ axioms. (For details about these axioms see \cite{CMDA})

	The semi-direct product $A \rtimes B$ is the dialgebra whose underlying set is $A \times B$ with usual scalar multiplication, component wise addition and the binary operations defined by
	$$(a,b)\sol (a',b')=(a \sol a' +b \lb a'+a \rb b', b\sol b' ),$$
	$$(a,b)\sag (a',b')=(a \sag a' +b \la a'+a \ra b', b\sag b' ),$$
	for $a,a'\in A$ and $b,b' \in B$.
\end{example}
\begin{theorem}\cite{BCDU}
	\label{semidirect} An action of $B$ on $A$ is a derived action if and only if
	$A\rtimes B$ is an object of $\mathbb{C}$.
\end{theorem}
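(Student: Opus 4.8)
The plan is to prove both implications directly by computing with the explicit formulas for the operations on $A \rtimes B$. Recall that $A \rtimes B$ is always a group (indeed, the underlying group is the usual semi-direct product of groups coming from the conjugation action $b \cdot a$), so the only thing at stake is whether the remaining structure — the unary operations $\omega \in \Omega_1'$ and the binary operations $\ast \in \Omega_2'$ — interacts correctly with the group structure, i.e.\ whether $A \rtimes B$ satisfies conditions (c), (d), Axiom 1 and Axiom 2. The key observation is that each defining identity of $\mathbb{C}$, when written out on pairs $(a,b), (a',b'), \dots$ using the semi-direct product formulas, splits into a ``$B$-component'' which holds automatically because $B \in \mathbb{C}$, and an ``$A$-component'' which, after cancelling the terms that hold because $A \in \mathbb{C}$, reduces precisely to one of the axioms for a derived action of $B$ on $A$. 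So the proof is a dictionary: each derived-action axiom $\Leftrightarrow$ the $A$-component of one identity of $\mathbb{C}$ for $A \rtimes B$.

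For the forward direction, I would assume the action is derived — meaning it arises from a split extension, equivalently it satisfies the list of derived-action identities recorded in \cite{BCDU} — and then verify the conditions (a)--(d) and Axioms 1--2 for $A \rtimes B$ one at a time. Condition (a) is immediate ($\Omega$ is unchanged). For (b), the group axioms hold since $A \rtimes B$ is a semi-direct product of groups, and $\Omega_0 = \{0\}$ with $0_{A \rtimes B} = (0,0)$. For (c), expand $(a',b') \ast \big((a,b)+(a'',b'')\big)$ and $(a',b')\ast(a,b) + (a',b')\ast(a'',b'')$ using the formulas for $+$ and $\ast$ on the semi-direct product; the $B$-coordinates agree because (c) holds in $B$, and equality in the $A$-coordinate comes down to the distributivity-type derived-action axioms (those relating $\ast$ to $b\cdot(-)$ and to $+$). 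Condition (d) splits similarly: the additivity $\omega((a,b)+(a',b')) = \omega(a,b)+\omega(a',b')$ reduces in the $A$-coordinate to $\omega(b\cdot a) = \omega(b)\cdot\omega(a)$ together with $\omega(a+a')=\omega(a)+\omega(a')$, and $\omega((a,b)\ast(a',b')) = \omega(a,b)\ast\omega(a',b')$ reduces to the mixed identities $\omega(b\ast a') = \omega(b)\ast\omega(a')$, $\omega(a\ast b') = \omega(a)\ast\omega(b')$, $\omega(a\ast a')=\omega(a)\ast\omega(a')$. Here it is essential that we are in an \emph{MCI} and not a category of interest: the relevant derived-action axiom is $\omega(x)\ast\omega(y)=\omega(x\ast y)$, which matches the form appearing after expansion. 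Axiom 1 reduces to the corresponding statement in $B$ on the $B$-coordinate and, on the $A$-coordinate, to the derived-action version of Axiom 1 (the one asserting $a_1 + (b_2\ast a_3) = (b_2\ast a_3)+a_1$ and its siblings). Axiom 2 is the most laborious: one expands $\big((a_1,b_1)\ast(a_2,b_2)\big)\,\overline{\ast}\,(a_3,b_3)$, collects the $A$-coordinate as a sum of nine-ish terms of the shapes $a_i\ast(a_j\ast a_k)$, $a_i\ast(b_j\ast a_k)$, $b_i\ast(a_j\ast a_k)$, etc., and checks that the word $W$ supplied by Axiom 2 for $\mathbb{C}$, applied coordinatewise, produces exactly this sum — using the $B$-coordinate identity for $W$ and the full list of derived-action axioms to rewrite the mixed triple products.

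For the converse, I would argue contrapositively via the universal property: if $A \rtimes B \in \mathbb{C}$, then the canonical maps give a split extension
\[
\begin{array}[c]{r}
\xymatrix{0\ar[r]&A\ar[r]^-{\iota}&A\rtimes B\ar[r]^-{\pi}&B\ar[r]&0},
\end{array}
\]
with $\iota(a)=(a,0)$, $\pi(a,b)=b$, and section $s(b)=(0,b)$; one checks $\iota$ is an injective morphism with image an ideal (using the semi-direct product formulas and the fact that $A \rtimes B \in \mathbb{C}$ guarantees $\iota(A)$ is closed under the operations with the claimed properties), $\pi$ is a surjective morphism, $\pi\iota = 0$, $\ker\pi = \operatorname{im}\iota$, and $\pi s = \mathrm{id}_B$. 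Then the action recovered from this split extension via the displayed formulas $b\cdot a = s(b)+\iota(a)-s(b)$ and $b\ast a = s(b)\ast \iota(a)$ is, by a direct computation with the semi-direct product operations, exactly the originally given action; hence that action is derived by definition.

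The main obstacle is bookkeeping in Axiom 2: matching the expanded $A$-coordinate of $\big((a_1,b_1)\ast(a_2,b_2)\big)\,\overline{\ast}\,(a_3,b_3)$ against $W$ evaluated on the eight arguments listed in Axiom 2, where each of those arguments is itself a sum of four mixed products after substituting the semi-direct formulas. The point to get right is that $W$ is built purely from operations in $\Omega_2'$ and from $+$, so it distributes over the sums appearing in each slot; after this distribution everything collapses to the derived-action axioms, but organizing the cancellation cleanly (and invoking the MCI-form of the $\omega$-compatibility rather than the weaker category-of-interest form wherever a unary operation sneaks in through $W$) is the delicate part. Everything else is a routine, if lengthy, verification, and I would present it as such, referring the reader to \cite{BCDU} for the explicit list of derived-action axioms being used.
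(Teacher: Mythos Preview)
The paper does not contain a proof of this theorem: it is stated in the Preliminaries with the citation \cite{BCDU} and no proof environment follows. So there is nothing in the present paper to compare your attempt against.

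That said, your proposal is the standard argument and is essentially how the result is established in the cited reference. The ``dictionary'' strategy---expanding each identity of $\mathbb{C}$ on $A\rtimes B$, noting that the $B$-coordinate holds automatically, and matching the $A$-coordinate to one of the derived-action conditions in the Proposition immediately following the theorem---is exactly the intended approach, and your converse via the canonical split extension $0\to A\xrightarrow{\iota}A\rtimes B\xrightarrow{\pi}B\to 0$ with section $s(b)=(0,b)$ is correct: the computation $(0,b)+(a,0)-(0,b)=(b\cdot a,0)$ and $(0,b)\ast(a,0)=(b\ast a,0)$ recovers the original action, so it is derived by definition. Your caution about Axiom~2 is well placed; that is indeed the only laborious step, and the MCI form $\omega(x\ast y)=\omega(x)\ast\omega(y)$ (rather than the category-of-interest form) is precisely what is needed in condition~(d), as you note.
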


\begin{proposition}
	\cite{BCDU} A set of actions of $B$ on $A$ in $\mathbb{C}_{G}$ is a set of
	derived actions if and only if it satisfies the following conditions:
	
	\begin{enumerate}

		\item[\textit{1.}] $0\cdot a=a$,
		
		\item[\textit{2.}] $b\cdot(a_{1}+a_{2})=b\cdot a_{1}+b\cdot a_{2}$,
		
		\item[\textit{3.}] $(b_{1}+b_{2})\cdot a=b_{1}\cdot(b_{2}\cdot a)$,
		
		\item[\textit{4.}] $b\ast(a_{1}+a_{2})=b\ast a_{1}+b\ast a_{2}$,
		
		\item[\textit{5.}] $(b_{1}+b_{2})\ast a=b_{1}\ast a+b_{2}\ast a$,
		
		\item[\textit{6.}] $(b_{1}\ast b_{2})\cdot(a_{1}\ast a_{2})=a_{1}\ast a_{2}$,
		
		\item[\textit{7.}] $(b_{1}\ast b_{2})\cdot(a\ast b)=a\ast b$,
		
		\item[\textit{8.}] $a_{1}\ast(b\cdot a_{2})=a_{1}\ast a_{2}$,
		
		\item[\textit{9.}] $b\ast(b_{1}\cdot a)=b\ast a$,
		
		\item[\textit{10.}] $\omega(b\cdot a)=\omega(b)\cdot\omega(a)$,
		
		\item[\textit{11.}] $\omega(a\ast b)=\omega(a)\ast\omega(b)$, \
		
		\item[\textit{12.}] $x\ast y+z\ast t=z\ast t+x\ast y$,
	\end{enumerate}
for each $\omega\in\Omega_{1}^{\prime}$, $\ast\in\Omega_{2}{}^{\prime}$, $b$,
$b_{1}$, $b_{2}\in B$, $a,a_{1},a_{2}\in A$ and for  $x,y,z,t\in A\cup B$
whenever each side of $12$ has a sense.
\end{proposition}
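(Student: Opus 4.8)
By Theorem~\ref{semidirect}, the displayed set of actions is a set of derived actions if and only if the semidirect product $A\rtimes B$, equipped with the three operation formulas above, is an object of $\mathbb{C}$; equivalently, if and only if $A\rtimes B$ satisfies every identity of $\mathbb{E}$ --- the group identities, the distributivity conditions $(c)$ and $(d)$, Axiom~1 and Axiom~2. The whole argument is thus the translation of each of these identities, evaluated on $A\rtimes B$, into a statement about the maps $f_{\ast}\colon A\times B\to A$, followed by the verification that the resulting list of statements is exactly $1$--$12$.

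The plan is to peel off the conditions in order of increasing complexity. First, neutrality of $(0,0)$, associativity of $+$ and the semidirect formula for $+$ give conditions $1,2,3$ (these are the classical conditions for a semidirect product of groups). Condition $(c)$, applied both to $\ast$ and to $\ast^{\circ}$ on elements of the form $(a,0)$ and $(0,b)$, gives the additive distributivity of each action map in each variable, i.e. conditions $4$ and $5$; applied instead to a conjugation-type element $(0,b)+(a,0)+(0,-b)=(b\cdot a,0)$ and using $1,4,5,12$ to collapse the telescoping sum, it gives conditions $8$ and $9$. Condition $(d)$ applied to $\omega(b\cdot a)$ and to $\omega(a\ast b)$ gives conditions $10$ and $11$ --- the single place where the modified identity $\omega(x\ast y)=\omega(x)\ast\omega(y)$, rather than the category-of-interest identity $\omega(x)\ast y=\omega(x\ast y)$, is felt. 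Finally, Axiom~1 evaluated on $\ast$-products inside $A\rtimes B$ (using condition $1$) produces the additive-commutativity relation $12$ together with the ``conjugation acts trivially on products'' relations $6$ and $7$, and Axiom~2 evaluated on the eight generator patterns $(x_{1},x_{2},x_{3})$ with each $x_{i}\in\{(a,0),(0,b)\}$ reduces --- after $6$--$9$ and $12$ annihilate every mixed term --- to Axiom~2 for $A$ and for $B$, which hold since $A,B\in\mathbb{C}$. Running each of these computations backwards gives the converse: from $1$--$12$ one checks every defining identity on $A\rtimes B$ and concludes via Theorem~\ref{semidirect}.

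Two points require care. To pass from validity of an identity on the generators $(a,0),(0,b)$ to validity on all of $A\rtimes B$, one writes a general element as $(a,0)+(0,b)$ and uses additivity of the operations in each slot --- but this already uses conditions $2,4,5,12$, so in the converse direction these must be secured \emph{before} the reduction-to-generators step is applied to $(c),(d)$ and, above all, to Axiom~2; keeping this bootstrapping order straight is the organisational subtlety. The genuine obstacle is Axiom~2 itself: the word $W$ attached to a pair $(\ast,\bar\ast)$ is only asserted to exist, so when expanding $(x_{1}\ast x_{2})\bar\ast x_{3}$ on $A\rtimes B$ one cannot appeal to a normal form, and one must verify that every triple product occurring in $W$, evaluated on a given generator pattern, is already forced to the correct value by conditions $6$--$9$ for \emph{whichever} admissible $W$ was chosen. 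This is the bulk of the (otherwise routine) work; everything else is short manipulation of the three displayed semidirect-product formulas.
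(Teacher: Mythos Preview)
The paper does not prove this proposition: it is quoted, with citation, from \cite{BCDU} as part of the preliminaries, and the text passes immediately to the next definition. There is therefore no proof in the present paper against which to compare your attempt.

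Your outline is nonetheless the standard one and is essentially what one finds in \cite{BCDU}: invoke Theorem~\ref{semidirect} to reduce the question to whether $A\rtimes B$ satisfies each identity in $\mathbb{E}$, and then translate each identity, evaluated on the semidirect-product formulas, into a condition on the action maps. One point to make explicit if you write this up: the statement is formulated for $\mathbb{C}_{G}$, the associated general category carrying only the group identities together with $(c)$, $(d)$, Axiom~1 and Axiom~2, rather than for $\mathbb{C}$ itself. Your treatment of Axiom~2 --- reducing it on $A\rtimes B$ to Axiom~2 for $A$ and for $B$ after conditions $6$--$9$ and $12$ dispose of the mixed terms --- is exactly right in $\mathbb{C}_{G}$, where there are no further identities in $\mathbb{E}$ to check; for a specific $\mathbb{C}$ with additional identities one would in general need further compatibility conditions on the actions, which is why the proposition is stated at the level of $\mathbb{C}_{G}$.
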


\begin{definition}
	\cite{BCDU} Let $A\in\mathbb{C}$. The center of $A$ is %
	\[%
	\begin{array}
	[c]{rl}%
	Z(A)= & \{z\in A\mid a+z=z+a,\text{ }a+\omega(z)=\omega(z)+a,\text{ }a\ast
	z=0,\text{ }a\ast\omega\left(  z\right)  =0,\\
	& \text{for all}\ a\in A,\ \omega\in\Omega_{1}\ \text{and}\ \ast\in\Omega
	_{2}{}^{\prime}\}.
	\end{array}
	\]
On the other hand, if $A$ is an ideal of $B$, then the centralizer of $A$ in $B$ is the ideal
\[%
\begin{array}
[c]{rl}%
Z(B,A)= & \{b\in B\mid a+b=b+a,a+\omega(b)=\omega(b)+a,a\ast b=0,\text{ }%
a\ast\omega\left(  b\right)  =0,\text{ }\\
& \text{for\ all}\ \ a\in A,\ \omega\in\Omega_{1}\ \text{and}\ \ast\in
\Omega_{2}{}^{\prime}\}.
\end{array}
\]
\end{definition}
A precrossed module in $\mathbb{C}$ is a triple $(C_{1},C_{0},\partial)$ where $C_{0},C_{1}%
	\in\mathbb{C}$, $C_{0}$ has a derived action on $C_{1}$ and
	$\partial:C_{1}\longrightarrow C_{0}$ is a morphism in $\mathbb{C}$ satisfying
	
	\begin{enumerate}
		\item[\textit{a)}] $\partial(c_{0}\cdot c_{1})=c_{0}+\partial(c_{1})-c_{0}$,
		
		\item[\textit{b)}] $\partial(c_{0}\ast c_{1})=c_{0}\ast\partial(c_{1}),$
	\end{enumerate}
	
	\noindent for all $c_{0}\in C_{0}$, $c_{1}\in C_{1}$, and $\ast\in\Omega_{2}{}^{\prime}$. In addition, if
	
	\begin{enumerate}
		\item[\textit{c)}] $\partial(c_{1})\cdot c_{1}^{\prime}=c_{1} +c_{1}^{\prime
		}-c_{1}$,
		
		\item[\textit{d)}] $\partial(c_{1})\ast c_{1}^{\prime}=c_{1}\ast c_{1}%
		^{\prime}$,
	\end{enumerate}
	
	\noindent for all $c_{1},$ $c_{1}^{\prime}\in C_{1}$, and $\ast\in\Omega_{2}{}^{\prime}%
	$, then the triple $(C_{1} ,C_{0},\partial)$ is called a crossed module in
	$\mathbb{C}$.
	\begin{definition}
		A morphism between two (pre)crossed modules $(C_{1},C_{0},\partial
		)\longrightarrow(C_{1}^{\prime},C_{0}^{\prime},\partial^{\prime})$ is a pair $(\mu_{1},\mu_{0})$ of morphisms
		$\mu_{0}:C_{0}\longrightarrow C_{0}^{\prime}$, $\mu_{1}:C_{1}\longrightarrow
		C_{1}^{\prime}$, such that
		
		\begin{enumerate}
			\item[\textit{a)}] $\mu_{0}\partial=\partial^{\prime}\mu_{1} $,
			
			\item[\textit{b)}] $\mu_{1}(c_{0}\cdot c_{1})=\mu_{0}(c_{0})\cdot\mu_{1}%
			(c_{1})$,
			
			\item[\textit{c)}] $\mu_{1}(c_{0}\ast c_{1})=\mu_{0}(c_{0})\ast\mu_{1}%
			(c_{1}),$
		\end{enumerate}
		for all $c_{0}\in C_{0}$, $c_{1}\in C_{1}$ and $\ast\in\Omega_{2}{}^{\prime}$.
		
	\end{definition}
	Consequently, we have the categories $\mathbf{PXMod}\mathbb{(C)}$ of precrossed modules and $\mathbf{XMod}\mathbb{(C)}$ of crossed modules.

\begin{example}
A crossed module in the category of dialgebras is a homomorphism\\ $\partial:D_{1}\longrightarrow D_{0}$
with an action of $D_{0}$ on $D_{1}$ such that\newline

$%
\begin{array}{ccc}
\mathbf{1)} & \partial (d_{0}\lb d_{1})=d_{0}\sol\partial (d_{1}), &  \\
& \partial (d_{0}\la d_{1})=d_{0}\sag\partial (d_{1}), &  \\
& \partial (d_{1}\rb d_{0})=\partial (d_{1})\sol d_{0}, &  \\
& \partial (d_{1}\ra d_{0})=\partial (d_{1})\sag d_{0}, &
\end{array}%
$\newline
\newline

$%
\begin{array}{cc}
\mathbf{2)} & \partial (d_{1})\lb d_{1}^{\prime }=d_{1}\sol d_{1}^{\prime
}=d_{1}\rb\partial (d_{1}^{\prime }), \\
& \partial (d_{1})\la d_{1}^{\prime }=d_{1}\sag d_{1}^{\prime }=d_{1}\ra%
\partial (d_{1}^{\prime }),%
\end{array}%
$\newline

\noindent for all $d_{1},d_{1}^{\prime }\in D_{1}$, $d_{0}\in D_{0}$. The definition covers the definition
given in \cite{CMDA}.
\end{example}

\begin{example}
Let $\partial :D_{1}\longrightarrow D_{0}$ and $\partial ^{\prime
}:D_{1}^{\prime }\longrightarrow D_{0}^{\prime }$ be crossed modules of
dialgebras. The pair $(\mu _{1},\mu _{0})$ consists of dialgebra
homomorphisms $\mu _{1}:D_{1}\longrightarrow D_{1}^{\prime }$, $\mu
_{0}:D_{0}\longrightarrow D_{0}^{\prime }$ which satisfies $\partial ^{\prime }\mu _{1}=\mu _{0}\partial $ and
\[
\begin{array}{c}
\mu _{1}(d_{0}\la d_{1})=\mu _{0}(d_{0})\la\mu _{1}(d_{1}), \\
\mu _{1}(d_{1}\rb d_{0})=\mu _{1}(d_{1})\rb\mu _{0}(d_{0}), \\
\mu _{1}(d_{0}\lb d_{1})=\mu _{0}(d_{0})\lb\mu _{1}(d_{1}), \\
\mu _{1}(d_{1}\ra d_{0})=\mu _{1}(d_{1})\ra\mu _{0}(d_{0}),%
\end{array}%
\]%
for all $d_{1}\in D_{1}$ and $d_{0}\in D_{0}$ is called a morphism between  $\partial :D_{1}\longrightarrow D_{0}$ and $\partial ^{\prime
}:D_{1}^{\prime }\longrightarrow D_{0}^{\prime }$ .
\end{example}
	
\begin{definition}
	Let  $(C_1,C_0,\mu )$
	be a (pre)crossed module in $\mathbb{C}$.
	A (pre)crossed module $(C'_1,C'_0,\mu' )$ is a (pre)crossed submodule of $(C_1,C_0,\mu )$ if $C'_1$ and $C'_0$ are subobjects of $C_1$, $C_0$, respectively, $\mu' = \mu|_{C'_1}$ and the action of $C'_0$ on $C'_1$ is induced by the action of $C_0$ on $C_1$. Additionally if $C'_0$ and $C'_1$ are ideals of $C_0$ and $C_1$ respectively, $c_0 \ast c'_1 \in C'_1$, $c'_0 \ast c_1 \in C'_1$, $c_0 \cdot c'_1 \in C'_1$, $c'_0 \cdot c_1-c_1 \in C'_1$,  for all $c_1 \in C_1$, $c_0 \in C_0$, $c'_1 \in C'_1$, $c'_0 \in C'_0$ then $(C'_1,C'_0,\mu' )$ is called a crossed ideal of $(C_1,C_0,\mu )$.
\end{definition}

\noindent Equivalently,  $(C'_1,C'_0,\mu' )$ is a crossed ideal of $(C_1,C_0,\mu )$ if and only if $(C'_1,C'_0,\mu' )$ is the kernel of some morphism.

\section{ Some Algebraic Structures in MCI}
In this section, first we will introduce the notion of (pre)cat$^1$-objects in a modified category of interest $\mathbb{C}$ and construct the corresponding category $\mathbf{(Pre)Cat^1}(\mathbb{C})$ of (pre)cat$^1$-objects with natural equivalence with the category $\mathbf{(P)Xmod}(\mathbb{C})$ of (pre) crossed modules in $\mathbb{C}$. Then we will introduce the notions of singularity, commutator and central extensions in $\mathbb{C}$. Also we show that the notion of central extension that we introduced in Definition \ref{central extension}   coincides with the definition of
centrality, in the sense of \cite{JanKel}.
\subsection{(Pre){Cat}$^{1}$- Objects in MCI}

\begin{definition}
A precat$^{1}$-object in $\mathbb{C}$ is a triple $(C, \omega_0, \omega_1)$, where  $C \in \mathbb{C}$ and \\ $ \omega_0, \omega_1:C\longrightarrow C$, are morphisms in $\mathbb{C}$ which satisfy
	
\begin{enumerate}
\item[\textit{1)}] $ \omega_0 \omega_1= \omega_1$, $ \omega_1 \omega_0= \omega_0$.
\end{enumerate}
In addition, if
\begin{enumerate}
\item[\textit{2)}] $x\ast y=0$, $x+y-x-y=0$,
\end{enumerate}
 for all $\ast\in\Omega_{2}{}^{\prime}$
 and $x \in ker \omega_0$, $y \in ker \omega_1$,
then the triple $(C, \omega_0, \omega_1)$ is called a cat$^{1}$-object in $\mathbb{C}$.
\end{definition}
Consider the
category, whose objects are cat$^{1}$-objects and morphisms are $\mathbb{C}%
$-morphisms compatible with the maps $\omega_0$ and $ \omega_1$. We will denote this
category by $\mathbf{Cat}^\mathbf{1}(\mathbb{C})$.\\
Also we have the category $\mathbf{PreCat^1}(\mathbb{C})$ of precat$^1$-objects, in the same manner.
\begin{example}
	Let $\mathbb{C}$ be the category of Leibniz algebras. Then a cat$^{1}$-Leibniz
	algebra is a triple $(L, \omega_0, \omega_1)$ consists of a Leibniz algebra $L$ and Leibniz
	algebra homomorphisms $ \omega_0, \omega_1:L\longrightarrow L$ such that,
	
	\begin{enumerate}
		\item[\textit{1)}] $ \omega_0 \omega_1= \omega_1$, \, $\omega_1 \omega_0= \omega_0$,
		
		\item[\textit{2)}] $[x, y]=0=[y, x]$,
	\end{enumerate}
	 for all $x \in ker\omega_0$, $y \in ker\omega_1$.
\end{example}
\begin{example}
	A cat$^1$-dialgebra is a triple $(D, \omega_0, \omega_1)$ consists of a dialgebra $D$ and homomorphisms $ \omega_0, \omega_1:D\longrightarrow D$ such that,
	
	\begin{enumerate}
		\item[\textit{1)}] $ \omega_0 \omega_1= \omega_1$, \, $\omega_1 \omega_0= \omega_0$,
		
		\item[\textit{2)}]  $x\sol  y=0=y \sol x$,
		 $x \sag y=0=y \sag x $,
	\end{enumerate}
	for all $x \in ker\omega_0$, $y \in ker\omega_1$.
\end{example}

\begin{proposition}
The categories $\mathbf{XMod} \mathbb{(C)} $ and $\mathbf{Cat}^\mathbf{1} \mathbb{(C)} $ are naturally equivalent.
\end{proposition}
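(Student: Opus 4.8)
The plan is to build mutually quasi-inverse functors $F\colon\mathbf{XMod}(\mathbb{C})\to\mathbf{Cat}^1(\mathbb{C})$ and $G\colon\mathbf{Cat}^1(\mathbb{C})\to\mathbf{XMod}(\mathbb{C})$, adapting Loday's classical correspondence for groups to the MCI setting. For $F$, given a crossed module $(C_1,C_0,\partial)$ I would set $C=C_1\rtimes C_0$, which lies in $\mathbb{C}$ by Theorem \ref{semidirect} since the action is derived, and define $\omega_0(c_1,c_0)=(0,c_0)$ and $\omega_1(c_1,c_0)=(0,\partial(c_1)+c_0)$. The first task is to verify that $\omega_0$ and $\omega_1$ are $\mathbb{C}$-morphisms: for $\omega_0$ this is immediate from the semidirect-product formulas together with $b\cdot 0=0$ and $b\ast 0=0=0\ast a$; for $\omega_1$ one uses that $\partial$ is a $\mathbb{C}$-morphism and the precrossed module identities (a), (b) (and their $\ast^{\circ}$-forms), where the \emph{modified} axiom $\omega(x\ast y)=\omega(x)\ast\omega(y)$ is exactly what makes $\omega_1$ commute with the unary operations. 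Relation (1) is a one-line computation. Finally $\ker\omega_0=\{(c_1,0)\}$ and $\ker\omega_1=\{(c_1,-\partial c_1)\}$, and a direct calculation using the crossed module axioms (c), (d) yields $x\ast y=0$ and $x+y-x-y=0$ for $x\in\ker\omega_0$, $y\in\ker\omega_1$, so $F(C_1,C_0,\partial)=(C,\omega_0,\omega_1)$ is a cat$^1$-object; on morphisms, $F(\mu_1,\mu_0)=\mu_1\rtimes\mu_0$.

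For $G$, the structural point is that relation (1) forces $\omega_0$ and $\omega_1$ to be idempotent with a common image $N:=\operatorname{im}\omega_0=\operatorname{im}\omega_1$ on which both restrict to the identity: indeed $\operatorname{im}\omega_0=\operatorname{im}(\omega_1\omega_0)\subseteq\operatorname{im}\omega_1$ and symmetrically, while each image lies in the fixed-point set of the other map. Hence $C=\ker\omega_0\rtimes N$ internally ($c=(c-\omega_0 c)+\omega_0 c$ with $\ker\omega_0\cap N=0$), and since $\ker\omega_0$ is an ideal of $C$ (it is the kernel of the $\mathbb{C}$-morphism $\omega_0$) and $\omega_0\colon C\to N$ splits via the inclusion of $N$, the object $N$ acts on $\ker\omega_0$ by a derived action. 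I then put $G(C,\omega_0,\omega_1)=(\ker\omega_0,\,N,\,\omega_1|_{\ker\omega_0})$; note $\omega_1$ carries $\ker\omega_0$ into $\operatorname{im}\omega_1=N$. The precrossed module axioms (a), (b) hold because $\omega_1$ is a $\mathbb{C}$-morphism restricting to the identity on $N$, and the crossed module axioms (c), (d) fall out of the cat$^1$ condition (2) applied to the elements $-c_1+\omega_1(c_1)$ and $\omega_1(c_1)-c_1$ (both in $\ker\omega_1$, since $\omega_1$ is idempotent) together with $c_1'\in\ker\omega_0$, using bi-distributivity to pass from $\omega_1(c_1)\ast c_1'-c_1\ast c_1'$ to $(\omega_1(c_1)-c_1)\ast c_1'$. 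On morphisms, a $\mathbb{C}$-morphism $f\colon C\to C'$ compatible with the source/target maps restricts to $\ker\omega_0$ and corestricts to $N$.

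It then remains to exhibit natural isomorphisms $GF\cong\mathrm{id}$ and $FG\cong\mathrm{id}$. For $GF$: starting from $(C_1,C_0,\partial)$ one has $\ker\omega_0=C_1\times\{0\}\cong C_1$, $\operatorname{im}\omega_0=\{0\}\times C_0\cong C_0$, and $\omega_1|_{\ker\omega_0}$ corresponds to $\partial$, so the comparison is the evident isomorphism of crossed modules. For $FG$: the internal decomposition $C\cong\ker\omega_0\rtimes\operatorname{im}\omega_0$, $(x,n)\mapsto x+n$, is precisely a $\mathbb{C}$-isomorphism carrying $(\omega_0,\omega_1)$ to the source/target maps of $FG(C,\omega_0,\omega_1)$; naturality in both cases is routine. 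The precrossed variant $\mathbf{PXMod}(\mathbb{C})\simeq\mathbf{PreCat}^1(\mathbb{C})$ is identical, using only (a), (b) on one side and dropping condition (2) on the other.

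The main obstacle I expect is the backward functor $G$: one must check with care that $\ker\omega_0$ is an ideal and that $N$ acts on it by a \emph{derived} action, so that forming the semidirect product — hence applying Theorem \ref{semidirect} and the functor $F$ — is legitimate, and then the bookkeeping showing that condition (2) is not merely sufficient but \emph{equivalent} to the crossed module axioms (c), (d) under the decomposition. It is at precisely this step, and in checking that $\omega_1$ is a morphism for $F$, that being an MCI rather than a genuine category of interest is essential, through the modified identity $\omega(x)\ast\omega(y)=\omega(x\ast y)$.
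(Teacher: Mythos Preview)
Your proposal is correct and follows essentially the same route as the paper: both build the forward functor via the semidirect product with $\omega_0(c_1,c_0)=(0,c_0)$, $\omega_1(c_1,c_0)=(0,\partial(c_1)+c_0)$, and the backward functor via $(\ker\omega_0,\operatorname{Im}\omega_0,\omega_1|_{\ker\omega_0})$ with the conjugation and $\ast$-actions, then verify the cat$^1$ and crossed-module axioms by the same direct computations. Your write-up is in fact more explicit than the paper's about the natural isomorphisms $GF\cong\mathrm{id}$ and $FG\cong\mathrm{id}$; one small caveat is that the modified identity $\omega(x\ast y)=\omega(x)\ast\omega(y)$ is not what makes $\omega_1$ commute with the unary operations (that follows simply from $\partial$ being a $\mathbb{C}$-morphism and additivity of $\omega$), but rather enters earlier, in ensuring that $C_1\rtimes C_0$ is an object of $\mathbb{C}$ via Theorem~\ref{semidirect}.
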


\begin{proof}
Let $(C_{1},C_{0},\partial )$ be a crossed module in $\mathbb{C}$. Consider
the corresponding semi-direct product $C_{1}\rtimes C_{0}$ induced from the
action of $C_{0}$ on $C_{1}$. By Theorem \ref{semidirect}, $C_{1}\rtimes
C_{0}\in \mathbb{C}$. It is obvious that the maps $\omega _{0}:C_{1}\rtimes
C_{0}\longrightarrow C_{1}\rtimes C_{0}$, $\omega _{1}:C_{1}\rtimes
C_{0}\longrightarrow C_{1}\rtimes C_{0}$ defined by $\omega
_{0}(c_{1},c_{0})=(0,c_{0})$, $\omega _{1}(c_{1},c_{0})=(0,\partial
(c_{1})+c_{0})$, for all $(c_{1},c_{0})\in C_{1}\times C_{0}$ are $\mathbb{C%
}$-morphisms. On the other hand, since%
\[
\omega _{0}\omega _{1}(c_{1},c_{0})=\omega _{0}(0,\partial
(c_{1})+c_{0})=(0,\partial (c_{1})+c_{0})=\omega _{1}(c_{1},c_{0})
\]%
\newline
and%
\[
\omega _{1}\omega _{0}(c_{1},c_{0})=\omega _{1}(0,c_{0})=(0,c_{0})=\omega
_{0}(c_{1},c_{0}),
\]%
for all $(c_{1},c_{0})\in C_{1}\times C_{0}$, we have $\omega _{0}\omega
_{1}=\omega _{1}$, $\omega _{1}\omega _{0}=\omega _{0}$. Let $%
(c_{1},c_{0})\in ker\omega _{0}$ and $(\overline{c_{1}},\overline{c_{0}})\in
ker\omega _{1}$. Then we have $c_{0}=0$ and $\partial (\overline{c_{1}})+%
\overline{c_{0}}=0$. Consequently,
\begin{align*}
(c_{1},c_{0})+(\overline{c_{1}},\overline{c_{0}})& =(c_{1}+c_{0}.\overline{%
c_{1}},c_{0}+\overline{c_{0}}) \\
& =(c_{1}+\overline{c_{1}},\overline{c_{0}}) \\
& =(\overline{c_{1}}-\overline{c_{1}}+c_{1}+\overline{c_{1}},\overline{c_{0}}%
) \\
& =(\overline{c_{1}}+(-\partial (\overline{c_{1}}))\cdot c_{1},\overline{%
c_{0}}) \\
& =(\overline{c_{1}}+\overline{c_{0}}\cdot c_{1},\overline{c_{0}}+c_{0}) \\
& =(\overline{c_{1}},\overline{c_{0}})+(c_{1},c_{0})
\end{align*}%
and
\begin{align*}
(c_{1},c_{0})\ast (\overline{c_{1}},\overline{c_{0}})& =(c_{1}\ast \overline{%
c_{1}}+c_{1}\ast \overline{c_{0}}+c_{0}\ast \overline{c_{1}},c_{0}\ast
\overline{c_{0}}) \\
& =(c_{1}\ast \overline{c_{1}}+c_{1}\ast \overline{c_{0}}+0\ast \overline{%
c_{1}},0\ast \overline{c_{0}}) \\
& =(c_{1}\ast (\partial (\overline{c_{1}}))+c_{1}\ast \overline{c_{0}},0) \\
& =(c_{1}\ast (\partial (\overline{c_{1}})+\overline{c_{0}}),0) \\
& =(c_{1}\ast 0,0) \\
& =(0,0),
\end{align*}%
as required. So we have the functor $\mathfrak{C}:\mathbf{XMod}\mathbb{(C)}%
\longrightarrow \mathbf{Cat}^{\mathbf{1}}\mathbb{(C)}$.\newline
Conversely, given a cat$^{1}$-object $(C,\omega _{0},\omega _{1})$ in $%
\mathbb{C}$. Consider the morphism $\partial :C_{1}\longrightarrow C_{0}$
where $C_{1}=ker\omega _{0}$, $C_{0}=Im\omega _{0}$ and $\partial =\omega
_{1}\left\vert _{\ker \omega _{0}}\right. $. Define the dot action of $C_{0}$
on $C_{1}$ by $c_{0}\cdot c_{1}=c_{0}+c_{1}-c_{0}$ and the star actions by $%
c_{0}\ast c_{1}$, for $c_{0}\in C_{0}$, $c_{1}\in C_{1}, \, \ast \in \Omega'_2$. We claim that  $(C_{1},C_{0},\partial )$ is a crossed module in $%
\mathbb{C}$ with these actions.

By a direct calculation we have $\omega_0(c_{1})=0$ and there exist  $c\in C$
such that $\omega_0(c)=c_{0}$, for all $c_{0}\in C_{0}$, $c_{1}\in C_{1}$.
\newline
\textbf{i)} For all $c_{0}\in C_{0}$, $c_{1}\in C_{1}$, we have
\begin{align*}
\partial(c_{0}.c_{1}) & = \omega_1(c_{0}+c_{1}-c_{0}) \\
& = \omega_1( \omega_0(c)+c_{1}- \omega_0(c)) \\
& = \omega_1 \omega_0(c)+ \omega_1(c_{1})- \omega_1 \omega_0(c) \\
& = \omega_0(c)+ \omega_1(c_{1})- \omega_0(c) \\
& =c_{0}+\partial(c_{1})-c_{0} .
\end{align*}
\textbf{ii)} For all $c_{0}\in C_{0}$, $c_{1}\in C_{1}$, we have
\begin{align*}
\partial(c_{0}\ast c_{1}) & = \omega_1( \omega_0(c)\ast c_{1}) \\
& = \omega_1 \omega_0(c)\ast \omega_1(c_{1}) \\
& = \omega_0(c)\ast \omega_1(c_{1}) \\
& =c_{0}\ast\partial(c_{1}) .
\end{align*}
\textbf{iii)} Since $\omega_1\omega_1=\omega_1\omega_0\omega_1=\omega_0%
\omega_1=\omega_1$, we have  $\omega_1(c_{1}-\partial(c_{1})) = 0$,  which
means $(c_{1}-\partial(c_{1}))\in ker \omega_1$ and $(c_{1}-\partial
(c_{1}))+c_{1}^{\prime}-(c_{1}-\partial(c_{1}))-c_{1}^{\prime}=0$, for all  $%
c_{1}^{\prime}\in C_{1}$. Then,
\begin{align*}
\partial(c_{1}).c_{1}^{\prime} & =\partial(c_{1})+c^{\prime
}_{1}-\partial(c_{1}) \\
& =c_{1}-c_{1}+\partial(c_{1})+c_{1}^{\prime}-\partial (c_{1}) \\
& =c_{1}+c_{1}^{\prime}-c_{1},
\end{align*}
for all $c_{1},c_{1}^{\prime}\in C_{1}$ as required. \newline
\textbf{iv) }By a calculation similar to (iii) we have  $\partial(c_{1}\ast
c_{1}^{\prime})=\partial(c_{1})\ast c_{1}^{\prime}=c_{1}\ast c_{1}^{\prime}$%
, for all  $c_{1},c_{1}^{\prime}\in C_{1}, \, \ast \in \Omega'_2$.\newline
Consequently, we have the functor $\mathcal{X}:\mathbf{Cat}^\mathbf{1}
\mathbb{(C)} \longrightarrow \mathbf{XMod}\mathbb{(C)} $. The functors $%
\mathfrak{C}$ and $\mathfrak{X}$ give rise to a natural equivalence between $%
\mathbf{XMod}\mathbb{(C)}$ and $\mathbf{Cat}^\mathbf{1} \mathbb{(C)}$
\end{proof}\\
  By a similar way, we have the natural equivalence between $\mathbf{Precat}^\mathbf{1} \mathbb{(C)}$ and $\mathbf{PXMod}\mathbb{(C)}$.
\subsection{ Singularity, Commutators and Central Extensions}
In this section we introduce the notions of singularity, commutators and central extensions in MCI.
\subsubsection{Singularity and Commutators}
\begin{definition} \label{Singular}
	An object $C$ in $\mathbb{C}$ which coincides with its center is called
	singular.
\end{definition}
\begin{example}
Let $A$ be a dialgebra. Then the center $Z(A)$ of $A$ is the set
\begin{equation*}
\{z\in A\left\vert {}\right. a\sol z=0=z\sol a,\,\,a\sag z=0=z\sag a,\,\,%
\text{for all }a\in A\}.
\end{equation*}%
Consequently, $A$ is singular if $a\sol a^{\prime }=0=a\sag a^{\prime
}$, for all $a,a^{\prime }\in A$ .
\end{example}
\begin{example}
	Consider a cat$^1$-group $(G,\omega_0, \omega_1)$. Then $(G,\omega_0, \omega_1)$ is singular if\\ $g+g'=g'+g$, $g+ \omega_i (g')=\omega_i (g')+g$, for all $g,g' \in G$, $i=0,1$.
\end{example}
\begin{definition}
	Let $A \in \mathbb{C}$ and $S \subseteq A$. The smallest ideal containing $S$ will be called the ideal generated by $S$ and denoted by $ < S > $.	
\end{definition}

\begin{definition}
	Let $A\in \mathbb{C}$ and $B,C$ be ideals of $A$ then the ideal generated by
	the set $%
	\begin{array}{l}
	\left\{ b+c-b-c,\,b\ast c,\,b+\omega (c)-b-\omega (c),\,c+\omega
	(b)-c-\omega (b),\,b\ast \omega (c),\,c\ast \omega (b)\left\vert {}\right.
	\right.  \\
	\left. b\in B,c\in C\right\} \text{{will be called the commutator object of }%
		B {and }C. } \\
	\end{array}%
	$
\end{definition}

Let $A\in \mathbb{C}$. The ideal generated by the set $\{x+y-x-y,x+\omega
(y)-x-\omega (y),x\ast y,x\ast \omega (y)\left\vert {}\right. x,y\in A,\ast
\in \Omega _{2}^{\prime }\}$ will be called the commutator of $A$ and denoted by $[A,A]$. Also, $A/[A,A]$ will be called the singularization of $A$.

\begin{example}
Let $D$ be a dialgebra. The commutator of $D$ is is the ideal generated by
the set $\{a\sol b,b\sag a\left\vert {}\right. a,b\in D\}$. Additionally,
the singularization of $D$ is
\begin{equation*}
D/\left\langle a\sol b,b\sag a;a,b\in D\right\rangle .
\end{equation*}
\end{example}

\begin{proposition}
An object $C \in \mathbb{C}$ is singular if and only if $[C,C]=0$.
\end{proposition}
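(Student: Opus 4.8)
The statement is an "if and only if" relating singularity of $C$ (i.e. $C = Z(C)$) to the vanishing of the commutator ideal $[C,C]$, where $[C,C]$ is the ideal generated by the set
$$
S = \{\, x+y-x-y,\ x+\omega(y)-x-\omega(y),\ x\ast y,\ x\ast\omega(y)\ \mid\ x,y\in C,\ \omega\in\Omega_1^{\prime},\ \ast\in\Omega_2^{\prime}\,\}.
$$
My plan is to prove both implications by unwinding the definition of the center $Z(C)$ from the earlier definition and comparing it termwise with the generators of $[C,C]$.

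For the forward direction, suppose $C$ is singular, so every $z\in C$ satisfies $a+z=z+a$, $a+\omega(z)=\omega(z)+a$, $a\ast z = 0$, and $a\ast\omega(z)=0$ for all $a\in C$, $\omega\in\Omega_1$, $\ast\in\Omega_2^{\prime}$. First I would observe that this makes every generator in $S$ equal to $0$: $x+y-x-y = 0$ because $y$ is central (it commutes with $x$); $x\ast y = 0$ because $y$ is central; $x+\omega(y)-x-\omega(y) = 0$ and $x\ast\omega(y)=0$ likewise using the conditions on $\omega(y)$. Hence $S=\{0\}$, so the ideal it generates is $0$, i.e. $[C,C]=0$. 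This direction is essentially immediate.

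For the converse, suppose $[C,C]=0$. Since $[C,C]$ is the ideal \emph{generated} by $S$ and ideals are closed under the ambient operations, $[C,C]=0$ forces $S\subseteq\{0\}$ already, i.e. \emph{every} element of $S$ is $0$. Thus for all $x,y\in C$: $x+y-x-y=0$, which rearranges (adding $x$ on the left and $y$ on the right, using the group identities) to $x+y=y+x$; $x\ast y=0$; $x+\omega(y)-x-\omega(y)=0$, giving $x+\omega(y)=\omega(y)+x$; and $x\ast\omega(y)=0$. Reading these four conditions with the roles "$a$" $:=x$ and "$z$" $:=y$ shows that every $y\in C$ lies in $Z(C)$, hence $C\subseteq Z(C)$; since always $Z(C)\subseteq C$, we get $C=Z(C)$, i.e. $C$ is singular. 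One small bookkeeping point: the center's definition quantifies $\omega$ over $\Omega_1$ (including the identity unary operation), whereas $S$ uses $\omega\in\Omega_1^{\prime}$; the identity operation case is covered by the plain conditions $a+z=z+a$ and $a\ast z=0$, so nothing is lost.

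The only mildly delicate point — and the one I would write out carefully — is the step in the converse that from "the ideal generated by $S$ is zero" one may conclude "$S$ itself is zero"; this uses that $S\subseteq\langle S\rangle=[C,C]=0$, which is immediate from the definition of the generated ideal as the smallest ideal containing $S$. Everything else is routine rearrangement using only the group axioms and the bilinearity/compatibility identities $(c)$–$(d)$ already recorded in the preliminaries, so I expect no real obstacle.
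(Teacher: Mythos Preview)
Your proposal is correct and is exactly the ``direct checking'' the paper has in mind: both directions reduce to comparing the defining conditions of $Z(C)$ with the generators of $[C,C]$, using that $S \subseteq \langle S \rangle$. One small slip that does not affect the argument: $\Omega_1' = \Omega_1 \setminus \{-\}$ (it excludes negation, not the identity), but the case $\omega = -$ is still handled by the plain conditions since $a+z=z+a$ implies $a+(-z)=(-z)+a$ in any group and $a \ast (-z) = -(a \ast z) = 0$ by distributivity.
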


\begin{proof}
Direct checking.
\end{proof}

\begin{remark}	
	The definition of commutators in $\mathbb{C}$ coincides with the Huq's commutator \cite{hug1} and the relative commutator (see \cite{everart}) with the Birkhoff subcategory $\mathbf{Ab}(\mathbb{C})$
	 of singular objects in $\mathbb{C}$.
\end{remark}
\begin{theorem}
	For any object $A \in \mathbb{C}$, the commutator ideal $[A,A]$ is the unique smallest ideal for which makes $A/[A,A]$ singular.
\end{theorem}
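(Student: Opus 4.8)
The plan is to prove two things: first, that the quotient $A/[A,A]$ is singular, and second, that $[A,A]$ is contained in every ideal $I$ for which $A/I$ is singular — together these give that $[A,A]$ is the unique smallest such ideal. The first part follows directly from the preceding proposition: an object is singular iff its commutator is trivial, so I would compute $[A/[A,A], A/[A,A]]$ and show it is $0$. Concretely, let $\pi\colon A\to A/[A,A]$ be the projection. For $\bar x=\pi(x)$, $\bar y=\pi(y)$ the generating elements $\bar x+\bar y-\bar x-\bar y$, $\bar x+\omega(\bar y)-\bar x-\omega(\bar y)$, $\bar x\ast\bar y$, $\bar x\ast\omega(\bar y)$ of $[A/[A,A],A/[A,A]]$ are the images under $\pi$ of the corresponding generators of $[A,A]$; since $\pi$ is a morphism in $\mathbb{C}$ it commutes with all operations $+$, $-$, $\omega$, $\ast$, so each such image is $0$ in $A/[A,A]$. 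As the image of an ideal under a surjective morphism is an ideal, the ideal generated by these (vanishing) elements is $0$, hence $A/[A,A]$ is singular by the proposition.

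For the minimality part, suppose $I\trianglelefteq A$ is an ideal with $A/I$ singular, and let $q\colon A\to A/I$ be the quotient morphism. Since $A/I$ is singular, $[A/I,A/I]=0$, so for all $x,y\in A$ the elements $q(x)+q(y)-q(x)-q(y)$, $q(x)+\omega(q(y))-q(x)-\omega(q(y))$, $q(x)\ast q(y)$, $q(x)\ast\omega(q(y))$ all vanish in $A/I$. Again using that $q$ is a $\mathbb{C}$-morphism, this says $q(x+y-x-y)=0$, $q(x+\omega(y)-x-\omega(y))=0$, $q(x\ast y)=0$, $q(x\ast\omega(y))=0$, i.e.\ all the generators of $[A,A]$ lie in $\ker q=I$. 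Since $I$ is an ideal and $[A,A]$ is by definition the \emph{smallest} ideal containing those generators, we conclude $[A,A]\subseteq I$. Combining the two parts: $[A,A]$ makes $A/[A,A]$ singular, and any ideal with that property contains $[A,A]$; in particular $[A,A]$ is contained in the intersection of all such ideals and is itself one of them, so it equals that intersection, proving uniqueness as the smallest.

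I do not expect a genuine obstacle here; the only point requiring care is the bookkeeping that the generating set of $[A/I,A/I]$ is exactly the $q$-image of the generating set of $[A,A]$, which relies on conditions $(b)$–$(d)$ and the definition of a category of groups with operations guaranteeing that morphisms preserve $+$, $-$, each $\omega\in\Omega_1$ and each $\ast\in\Omega_2'$ — so that expressions like $x+\omega(y)-x-\omega(y)$ and $x\ast\omega(y)$ are carried to the analogous expressions in the quotient. One should also note explicitly that an ideal (being a kernel) is closed under the relevant operations with ambient elements, which is what lets "smallest ideal containing the generators" do its job. After that, the statement is essentially a formal consequence of the earlier proposition characterizing singularity via vanishing of the commutator.
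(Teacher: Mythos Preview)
Your proof is correct and is precisely the ``direct checking'' the paper alludes to: you verify that $A/[A,A]$ is singular by showing all generators of its commutator vanish, and you prove minimality by showing that the generators of $[A,A]$ lie in $\ker q$ whenever $A/I$ is singular. The paper gives no further detail beyond ``Direct checking,'' so your argument is a faithful expansion of the intended proof.
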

\begin{proof}
Direct checking.
\end{proof}
\newline

Denote the full subcategory consists of all singular objects in $\mathbb{C}$
by $\mathbf{Ab}(\mathbb{C})$. We have the functor $\mathfrak{Sing}:\mathbb{C}%
\longrightarrow \mathbf{Ab}(\mathbb{C})$ which takes any object $C$ to its
singularization $C/[C,C]$. Additionally, we have the functor $\mathfrak{inc.}%
:\mathbf{Ab}(\mathbb{C})\longrightarrow \mathbb{C}$ which is the inclusion
of the Birkhoff variety $\mathbf{Ab}(\mathbb{C})$ in $\mathbb{C}$.
Consequently we have the adjunction $%
\begin{array}{c}
\text{\textquotedblleft }\mathfrak{Sing}\dashv \mathfrak{inc.}\text{%
\textquotedblright }%
\end{array}%
$\ which can be diagrammed by%
\begin{equation*}
\xymatrix{ \mathbb{C} \ar@<1ex>[rr]^-{\mathfrak{Sing}} & &
\mathbf{Ab}(\mathbb{C}) \ar@<1ex>[ll]^-{\mathfrak{inc.}} }\text{ }.
\end{equation*}

\subsubsection{Central Extensions}
\begin{definition} \label{central extension}
Let	$C \in \mathbb{C}$ and $A \in  \mathbf{Ab}(\mathbb{C})$. $A$ central extension of $C$ by $A$ is an extension
	\begin{equation*}
		E:\xymatrix{A \ \ar@{>->}[r] & B \ar@{->>}[r] & C}
	\end{equation*}%
	such that $A$ is an subobject of $Z(B)$.
\end{definition}

Janelidze and Kelly \cite{JanKel} introduced the central extension in an exact
category, relative to an "admissible" subcategory. From \cite{jansem}, any modified category of interest $\mathbb{C}$ is Barr exact Mal'tsev category and so any Birkhoff subcategory of $\mathbb{C}$ is admissible which gives rise to consider the categorical theory of
central extensions in $\mathbb{C}$.

An extension $f:A\longrightarrow B$ is called trivial, in the sense of
\cite{JanKel}, if the diagram%
\begin{equation*}
	\xymatrix{A \ar[d]_-f \ar[r] & {{\mathfrak{Sing}}(A)}
		\ar[d]^-{{\mathfrak{Sing}}(f)} \\ B \ar[r] & {{\mathfrak{Sing}}(B)} }
\end{equation*}%
is pullback, where the horizontal morphisms are given by the unit of the
adjunction. An extension is called central, in the sense of \cite{JanKel}, if there exists an extension\\ $\rho
:E\longrightarrow B$ of $B$ such that in the pullback%
\begin{equation*}
	\xymatrix{{E\times_B A} \ar[d]_-{\pi _{1}} \ar[r]^-{\pi _{2}} & A
		\ar[d]^-{f} \\ {E} \ar[r]_-{\rho } & {B} }
\end{equation*}
the morphism $\pi _{1}$ is a trivial extension.

\begin{proposition}\label{xcenter}
	Definition \ref{central extension} coincides with the definition of
	centrality given in \cite{JanKel}. (Here, we consider the category $
	\mathbb{C}$ and the admissible subcategory ${\mathbf{Ab}}(\mathbb{C})$)

\end{proposition}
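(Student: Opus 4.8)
The plan is to match Definition~\ref{central extension} with the Janelidze--Kelly centrality condition through the commutator, in three steps. Throughout I use, as already recorded from \cite{jansem}, that $\mathbb{C}$ is a Barr exact Mal'tsev category and that $\mathbf{Ab}(\mathbb{C})$ is an admissible Birkhoff subcategory, so that the theory of central extensions of \cite{JanKel} applies with the reflector $\mathfrak{Sing}\colon\mathbb{C}\to\mathbf{Ab}(\mathbb{C})$. Step~1 is the reduction of centrality to a commutator condition. Let $p\colon B\to C$ be an extension with kernel $A$. Testing centrality along $p$ itself: $p$ is an effective descent morphism in the Barr exact category $\mathbb{C}$ and is weakly universal among covers of $C$, so $p$ is central in the sense of \cite{JanKel} if and only if the first projection $\pi_{1}\colon B\times_{C}B\to B$ of the kernel pair of $p$ is a trivial extension. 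Now $\pi_{1}$ is a split epimorphism (the diagonal is a section), its kernel is canonically isomorphic to $A$, and by Theorem~\ref{semidirect} its domain is the semidirect product $A\rtimes B$, the action of $B$ on $A$ being the restriction of the operations of $B$ to the ideal $A$; and a split extension $K\rtimes B\to B$ is trivial relative to $\mathbf{Ab}(\mathbb{C})$ precisely when $[K,\,K\rtimes B]=0$. Hence $p$ is central if and only if $[A,\,A\rtimes B]=0$.

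Step~2 identifies the commutator, and Step~3 computes it. By the Remark preceding this subsection, the commutator relative to $\mathbf{Ab}(\mathbb{C})$ coincides with the Huq commutator \cite{hug1} and with the commutator object of ideals defined above; unwinding the operations of $A\rtimes B$ recalled above, the generators $a+b-a-b$, $a\ast b$, $a+\omega(b)-a-\omega(b)$, $b+\omega(a)-b-\omega(a)$, $a\ast\omega(b)$, $b\ast\omega(a)$ of $[A,\,A\rtimes B]$ reduce to the generators of the commutator of the ideals $A$ and $B$ inside $B$ (here one uses that $\Omega_{1}'$ and $\Omega_{2}'$ send $A$ into $A$, since $A$ is a subobject). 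So $[A,\,A\rtimes B]=0$ if and only if every element of $A$ commutes additively with every $b$ and every $\omega(b)$ in $B$, and is annihilated by each $\ast\in\Omega_{2}'$ against every $b$ and $\omega(b)$ --- which by the definition of the center says exactly $A\subseteq Z(B)$. Chaining the steps: $p$ is central in the sense of \cite{JanKel} $\iff$ $[A,\,A\rtimes B]=0$ $\iff$ $A\subseteq Z(B)$, i.e.\ Definition~\ref{central extension}. (Note that $A\subseteq Z(B)$ forces $[A,A]=0$, so the standing hypothesis $A\in\mathbf{Ab}(\mathbb{C})$ in Definition~\ref{central extension} is automatic, in accordance with the fact that $\mathbf{Ab}(\mathbb{C})$-central extensions have singular kernel.)

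I expect Step~1 to be the main obstacle: justifying the dictionary between the descent-theoretic triviality of $\pi_{1}$ --- ``the square formed from the units of $\mathfrak{Sing}$ is a pullback'' --- and the algebraic condition $[K,\,K\rtimes B]=0$, together with the reduction of centrality to a single test along the kernel pair. One may simply invoke the general results of \cite{JanKel} (and the relative-commutator description of central extensions in the semi-abelian/Mal'tsev setting) and then use the Remark to pass to our commutator object; alternatively one argues directly, noting that triviality of the split epimorphism $\pi_{1}$ means exactly that the comparison morphism $A\rtimes B\to B\times_{\mathfrak{Sing}(B)}\mathfrak{Sing}(A\rtimes B)$ is an isomorphism, and then unwinding this using $\mathfrak{Sing}=(-)/[-,-]$ and the explicit semidirect-product operations. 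Steps~2 and~3 are then routine direct checking.
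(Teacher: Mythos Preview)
Your argument is correct, but it follows a genuinely different route from the paper's. The paper works by direct element computation in both directions. For the forward implication it takes the kernel-pair pullback $B\times_{C}B$ and verifies by hand that the comparison map $(b,b')\mapsto (b,\overline{(b,b')})$ to $B\times_{\mathfrak{Sing}(B)}\mathfrak{Sing}(B\times_{C}B)$ is an isomorphism. For the converse it starts from an \emph{arbitrary} trivializing cover $E\twoheadrightarrow C$, observes that $A$ injects into $\mathfrak{Sing}(E\times_{C}B)$ as the kernel of $\mathfrak{Sing}(\pi_{1})$, and then computes elementwise that $\sigma(b+a-b-a)=\overline{(0,0)}$ (and similarly for the other generators), whence $A\subseteq Z(B)$. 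No commutator formalism is used.

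Your strategy instead imports the relative-commutator characterisation of $\mathbf{Ab}(\mathbb{C})$-central extensions: reduce to the kernel pair, identify $B\times_{C}B$ with $A\rtimes B$, translate triviality of the split epimorphism into $[A,A\rtimes B]=0$, and finally unwind this as $[A,B]=0$ in $B$, i.e.\ $A\subseteq Z(B)$. This is more conceptual, makes the proposition an instance of a general semi-abelian theorem, and explains why the center enters; it also renders the parenthetical remark about $A\in\mathbf{Ab}(\mathbb{C})$ transparent. The cost is exactly what you flag: Step~1 (central $\Leftrightarrow$ kernel pair trivial, and split-trivial $\Leftrightarrow$ vanishing commutator) is not self-contained and must be justified either by citing \cite{JanKel,everart} or by a separate computation, whereas the paper's proof stays entirely inside elementary manipulations in $\mathbb{C}$. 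A minor streamlining: rather than passing through $[A,A\rtimes B]$ and then reducing the generators back to $B$, you could invoke directly the known equivalence ``$p$ central $\iff [\ker p,\mathrm{dom}\,p]=0$'' for the Birkhoff reflector $\mathfrak{Sing}$, which lands you immediately at $[A,B]=0$ inside $B$.
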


\begin{proof}
	Let%
	\begin{equation*}
		\xymatrix{A \ \ar@{>->}[r] & B \ar@{->>}[r] & C}
	\end{equation*}%
	be an extension in $
	\mathbb{C}$ with $A\subset Z(B)$. Consider the pullback diagram%
	\begin{equation*}
		\xymatrix{{B\times_C B} \ar[d]_-{\pi _{1}} \ar[r]^-{\pi _{2}} & B
			\ar[d] \\ {B} \ar[r] & {C} }
	\end{equation*}%
	By a direct calculation, the diagram%
	\begin{equation*}
		\xymatrix{{B\times_C B} \ar[d]_-{\pi _{1}} \ar[r] &
			{{\mathfrak{Sing}({{B\times_C B}})}}
			\ar[d]^-{{\mathfrak{Sing}}({\pi _{1}})} \\ {{C}} \ar[r] &
			{\mathfrak{Sing}({C})} }
	\end{equation*}
	is pullback, that is, there exist an isomorphism between $B{\times }_{C}B$
and the fiber product $C{\times }_{\mathfrak{Sing}(C)}\mathfrak{Sing}(B{%
\times }_{C}B)$ defined by $(b,b^{\prime })\longmapsto (b,\overline{%
(b,b^{\prime })})$. So the morphism \\$\pi _{1}:{B\times _{C}B}\longrightarrow
C$ is trivial extension from which we get the centrality in the sense of
\cite{JanKel}.
	
	Conversely, given an extension%
	\begin{equation*}
		\xymatrix{A \ \ar@{>->}[r] & B \ar@{->>}[r]^-{\vartheta _{B}} & C}
	\end{equation*}%
	in $\mathbb{C}$ which is central in the sense of \cite{JanKel}. Then there exists an extension
	$\xymatrix{ E \ar@{->>}[r]^-{\vartheta _{E}} & C}$ such that in the pullback%
	\begin{equation*}
		\xymatrix{{E\times_C B} \ar[d]_-{\pi _{1}} \ar[r]^-{\pi _{2}} & B
			\ar[d] \\ {E} \ar[r] & {C} }
	\end{equation*}%
	the morphism $\pi _{1}:{E\times_C B}\longrightarrow C$ is a
	trivial extension, in other words, the diagram%
	\begin{equation*}
		\xymatrix{{E\times_C B} \ar[d]_-{\pi _{1}} \ar[r]^-{\pi _{2}} &
			{{\mathfrak{Sing}({E\times_C B})}}
			\ar[d]^-{{\mathfrak{Sing}}({\pi _{1}})} \\ {E} \ar[r] &
			{\mathfrak{Sing}({E})} }
	\end{equation*}%
	is pullback. The kernel of $\pi _{1}$ is the injection $\xymatrix{A \ \ar@{>->}[r] &
	{{E \times_C B}}}$ and the kernel of ${\mathfrak{Sing}}(\pi _{1})$
	is the injection $\sigma :A\longrightarrow {\mathfrak{Sing}
		(E\underset{C}{\times} B})$, defined by $\sigma (a)=\overline{(0,a)}$ where $\overline{(0,a)}$ denotes
	the related coset. We want to show that $A\subset Z(B)$. For this we need to show $%
	b+a=a+b,$ $b+\omega (a)=\omega (a)+b,$ $b\ast a=0,$ $b\ast \omega (a)=0$ for
	all $a\in A,b\in B,\omega \in \Omega _{1},\ast \in \Omega _{2}^{\prime }.$
	For all $b\in B$ there exists $e\in E$ such that $\varphi _{B}(b)=\varphi
	_{E}(e).$ Since
		
		\begin{align*}
			\sigma (b+a-b-a) & =\overline{(0,b+a-b-a)} \\
			& =\overline{(0,b)}+\overline{(e,a)}-\overline{(0,b)}-\overline{(e,a)} \\
			& =\overline{(0,b)-(0,b)}+\overline{(e,a)-(e,a)} \\
			& =\overline{(0,0)}
		\end{align*}
	we have $b+a-b-a=0$. By similar calculations we get that $A\subseteq Z(B)$,
	as required.
\end{proof}

 \section{ Applications to (Pre)Crossed Modules in MCI}

  In this section, we introduce the notions of center, singularity and central extension of a (pre)crossed modules in modified categories of interest. For this, we inspired from the equivalence of the categories $\mathbf{(Pre)Cat^1}(\mathbb C)$ of (pre)cat$^1$-objects and $\mathbf{(P)Xmod}(\mathbb C)$ of (pre)crossed modules. In the case of precrossed modules of groups (Lie algebras), the notions give the definitions of centers, singularity and central extensions \cite{LCE,Ljas,CL,PHDN,Nor}.

\subsection{Center and Singularity of Precrossed Modules in MCI }

Let $(C_{1},C_{0},\partial )$ be a precrossed module and $(C_{1}\rtimes C_{0},\omega _{0},\omega _{1})$ be the corresponding precat$^{1}$-object. The center $Z(C_{1}\rtimes C_{0},\omega _{0},\omega _{1})$ of $(C_{1}\rtimes
C_{0},\omega _{0},\omega _{1})$ is the ideal \newline

\noindent $%
\begin{array}{l}
Z(C_{1}\rtimes C_{0},\omega _{0},\omega _{1})=\{(z_{1},z_{0})\in
C_{1}\rtimes C_{0}\left\vert {}\right. z_{1}+z_{0}\cdot
c_{1}=c_{1}+c_{0}\cdot z_{1},\text{ }z_{1}+c_{1}=c_{1}+z_{1}, \\
\text{ \ \ \ \ \ \ \ \ \ \ \ \ \ \ \ \ \ \ \ \ \ \ \ \ \ \ \ \ \ }%
c_{1}=z_{0}\cdot c_{1},\text{ }c_{1}=\partial (z_{1})\cdot c_{1},\text{ }%
c_{0}+\partial (z_{1})=\partial (z_{1})+c_{0}, \\
\text{ \ \ \ \ \ \ \ \ \ \ \ \ \ \ \ \ \ \ \ \ \ \ \ \ \ \ \ \ \ }(c_{1}\ast
z_{0})+(c_{0}\ast z_{1})+(c_{1}\ast z_{0})=0,\text{ }(c_{1}\ast z_{1})=0,%
\text{ }(c_{1}\ast z_{0})=0, \\
\text{ \ \ \ \ \ \ \ \ \ \ \ \ \ \ \ \ \ \ \ \ \ \ \ \ \ \ \ \ \ }(c_{1}\ast
\partial (z_{1})=0,\text{ }\partial (c_{0}\ast z_{1})=0,\text{ for all }%
(c_{1},c_{0})\in C_{1}\rtimes C_{0},\ast \in \Omega _{2}{}^{\prime }\}%
\end{array}%
$\newline
\newline

\noindent The image $\mathcal{X}(Z(C_{1}\rtimes C_{0},\omega _{0},\omega _{1}))$ is the
precrossed ideal $(Z_{1},Z_{0},\partial \left\vert {}\right. )$ of $%
(C_{1},C_{0},\partial )$ where\newline

\noindent $%
\begin{array}{ll}
Z_{1}= & \{z_{1}\in C_{1}\left\vert {}\right.
z_{1}+c_{1}=c_{1}+z_{1},c_{1}\cdot (\partial (z_{1}))=c_{1},\text{ } \\
& c_{0}+\partial (z_{1})=\partial (z_{1})+c_{0},\,z_{1}=c_{0}\cdot z_{1},%
\text{ }c_{1}\ast z_{1}=0, \\
& c_{1}\ast (\partial (z_{1}))=0,\,c_{0}\ast z_{1}=0,\text{ for all }%
c_{1}\in C_{1},c_{0}\in C_{0},\ast \in \Omega _{2}{}^{\prime }\},%
\end{array}%
$\newline

\noindent and\newline

\noindent $%
\begin{array}{l}
Z_{0}=\{z_{0}\in C_{0}\left\vert {}\right. z_{0}\cdot c_{1}=c_{1},\text{ }%
z_{0}+c_{0}=c_{0}+z_{0},\text{ } \\
\text{ \ \ \ \ \ \ \ \ }c_{1}\ast z_{0}=0,\text{ }c_{0}\ast z_{0}=0,\text{
for all }c_{0}\in C_{0},c_{1}\in C_{1},\ast \in \Omega _{2}{}^{\prime }\}.%
\end{array}%
$\newline

\noindent If $(C_{1},C_{0},\partial )$ is a crossed module, then \newline

\noindent $%
\begin{array}{cl}
Z_{1}= & \{z_{1}\in C_{1}\left\vert {}\right.
z_{1}+c_{1}=c_{1}+z_{1},c_{0}+\partial (z_{1})=\partial
(z_{1})+c_{0},c_{0}\cdot z_{1}=z_{1}, \\
& c_{1}\ast z_{1}=0,c_{0}\ast z_{1}=0,\text{ for all }c_{0}\in
C_{0},c_{1}\in C_{1},\ast \in \Omega _{2}{}^{\prime }\},%
\end{array}%
$\newline

\noindent $%
\begin{array}{cll}
Z_{0}= & \{z_{0}\in C_{0}| & z_{0}\cdot
c_{1}=c_{1},z_{0}+c_{0}=c_{0}+z_{0},c_{1}\ast z_{0}=0, \\
&  & c_{0}\ast z_{0}=0,\text{ for all }c_{0}\in C_{0},c_{1}\in C_{1},\ast
\in \Omega _{2}{}^{\prime }\}.%
\end{array}%
$\newline
\begin{definition} \label{Center}
$(Z_{1},Z_{0},\partial )$ will be called the center of $(C_{1},C_{0},\partial )$.
\end{definition}
\noindent We will denote the center of $%
(C_{1},C_{0},\partial )$ by $Z(C_{1},C_{0},\partial )$.

The notions of commuting morphisms and central objects were defined by Huq
\cite{hug1} in the categories with zero objects, products and coproducts,
whose morphisms have images. From these properties following the existence
of injections $\Gamma_i : B_{i} \longrightarrow B_{1} \times B_{2} , i=1,2 $
in the direct product in such a category, we have the following. .

\begin{definition}
	\cite{hug1} Two coterminal morphisms $\beta _{1}:B_{1}\longrightarrow A$ and
	$\beta _{2}:B_{2}\longrightarrow A$ are said to commute if there exists a
	morphism%
	\begin{equation*}
		\beta _{1}\circ \beta _{2}:B_{1}\times B_{2}\longrightarrow A
	\end{equation*}%
	making the diagram
	\begin{equation*}
		\xymatrix@R=45pt@C=45pt{
			B_{1} \ar[dr]_-{\beta_{1}} \ar[r]^-{\Gamma_{1}} & B_{1} \times B_{2} \ar[d]^-{\beta_{1} \circ \beta_{2}} & B_{2} \ar[l]_-{\Gamma_{2}} \ar[dl]^-{\beta_{2}} \\
			& A  &  }
	\end{equation*}%
	commutative, where $\Gamma _{i}$, $i=1,2$ denotes the injection of the direct product.
	In particular, a morphism $\beta :B\longrightarrow
	A$ said to be central if the identity morphism on $A$ commutes with $\beta $,
	i.e., if it makes the diagram%
	\begin{equation*}
		\xymatrix@R=45pt@C=45pt{
			A \ar@{=}[dr]_-{1_{A}} \ar[r]^-{} & A \times B \ar[d]^-{} & B \ar[l]_-{} \ar[dl]^-{\beta} \\
			& A  &  }
	\end{equation*}%
	commutative. Additionally, if we have a monomorphism $\beta
	:B\longrightarrow A$, then it is said that $B$ is a central subobject of $A$.
\end{definition}

\begin{definition}
	\label{mono}\cite{hug1} The center of an object is the maximal
	central subobject, relative to the order relation that exists on the set of
	monomorphisms.
\end{definition}

\begin{proposition}
	\label{subobject}Let  $(C_1,C_0,\partial )$ be crossed module. Then $
	Z(C_1,C_0,\partial | )$ is the maximal central subobject of $(C_1,C_0,\partial )$.
\end{proposition}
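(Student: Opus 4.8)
The plan is to show that the crossed submodule $Z(C_1,C_0,\partial)$ is central in the sense of Huq (Definition~\ref{mono}) and that any central subobject is contained in it. The strategy is to work through the equivalence $\mathcal X \colon \mathbf{Cat}^{\mathbf 1}(\mathbb C) \to \mathbf{XMod}(\mathbb C)$, so that the statement for crossed modules reduces to a statement about the cat$^1$-object $(C_1\rtimes C_0, \omega_0,\omega_1)$. The key observation is that a morphism of crossed modules $(\beta_1,\beta_0)\colon (B_1,B_0,\partial_B)\to (C_1,C_0,\partial)$ commutes with the identity (in the Huq sense) precisely when, after transporting along $\mathfrak C$, the induced $\mathbb C$-morphism $B_1\rtimes B_0 \to C_1\rtimes C_0$ factors through the centralizer of $C_1\rtimes C_0$ appearing in the computation preceding Definition~\ref{Center}; this is the cat$^1$-object analogue of the classical ``commuting morphisms means the bracket vanishes'' principle.

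First I would spell out what the Huq commutativity diagram
\[
\xymatrix@R=30pt@C=30pt{
A \ar@{=}[dr]_-{1_A} \ar[r] & A\times B \ar[d] & B \ar[l] \ar[dl]^-{\beta} \\
& A & }
\]
means concretely for $A=(C_1,C_0,\partial)$ and a monomorphism $\beta$ of crossed modules: the existence of the filler forces the images of $A$ and $B$ to ``$\ast$-annihilate'' each other and to commute additively, componentwise, i.e.\ exactly the relations defining $Z_1$ and $Z_0$ listed before Definition~\ref{Center}. Since $\mathbb C$ has products and the relevant injections $\Gamma_i$ (as noted in the paragraph before Huq's definition, and because $\mathbb C$ is a category of groups with operations, hence finitely complete and cocomplete), this is a finite check using conditions (a)--(d), Axiom~1 and Axiom~2, together with Proposition on derived actions. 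Second, I would verify directly that $(Z_1,Z_0,\partial|)$, equipped with the restricted/induced actions, is itself a crossed submodule of $(C_1,C_0,\partial)$ — this was implicitly used when it was called the center — and that the inclusion is a monomorphism; then the defining relations of $Z_1,Z_0$ say precisely that this inclusion makes the Huq diagram commute, so $Z(C_1,C_0,\partial|)$ is a central subobject.

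For maximality, I would take an arbitrary central subobject, i.e.\ a monomorphism $\beta\colon (B_1,B_0,\partial_B)\hookrightarrow (C_1,C_0,\partial)$ admitting a Huq filler $1\circ\beta\colon A\times B\to A$, and chase the images: for $b_1\in B_1$, $b_0\in B_0$, applying the filler to elements of the form $(\Gamma_1 a)\ast(\Gamma_2 b)$ and $\Gamma_1 a+\Gamma_2 b-\Gamma_1 a-\Gamma_2 b$ in $A\times B$ and using commutativity of the diagram forces $\beta_1(b_1)$ and $\beta_0(b_0)$ to satisfy all the defining equations of $Z_1$ and $Z_0$. Hence $\beta$ factors through the inclusion $Z(C_1,C_0,\partial|)\hookrightarrow (C_1,C_0,\partial)$, which gives $\beta \le Z(C_1,C_0,\partial|)$ in the order on monomorphisms; combined with the previous paragraph this yields maximality and completes the proof.

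The main obstacle I expect is the bookkeeping in the forward chase: in a general MCI there are the extra $\ast$-operations and the unary operations $\omega$, so ``commuting in the Huq sense'' unpacks into the full list of conditions (additive commutativity with $c_1$, with $\omega(z_1)$; vanishing of $c_1\ast z_1$, $c_0\ast z_1$, $c_1\ast\omega(z_1)$, the mixed semidirect-product $\ast$-terms, and $\partial(c_0\ast z_1)=0$), and one must be careful that the semidirect-product operations — in particular the cross terms $a'\ast b + b'\ast a$ in the formula for $(a',b')\ast(a,b)$ — are correctly matched against the filler. I would handle this by transporting through $\mathcal X$/$\mathfrak C$ once and for all, so that the computation is the one already carried out for $Z(C_1\rtimes C_0,\omega_0,\omega_1)$ in the text, rather than redoing it from scratch; the genuinely new content is only the translation between ``Huq filler exists'' and ``image lands in the centralizer,'' which is formal.
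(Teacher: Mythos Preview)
Your proposal is correct and follows essentially the same two-step scheme as the paper: first exhibit a Huq filler for the inclusion $Z(C_1,C_0,\partial|)\hookrightarrow (C_1,C_0,\partial)$, then show that any central monomorphism factors through $Z$. The only real difference is organizational: the paper works directly in $\mathbf{XMod}(\mathbb C)$ and writes down the filler explicitly as $\alpha_1(c_1,z_1)=c_1+z_1$, $\alpha_0(c_0,z_0)=c_0+z_0$, whereas you propose routing the bookkeeping through the equivalence $\mathfrak C/\mathcal X$ with $\mathbf{Cat}^{\mathbf 1}(\mathbb C)$ so as to reuse the computation of $Z(C_1\rtimes C_0,\omega_0,\omega_1)$ already done in the text. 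That detour is harmless but unnecessary --- once you have the explicit formula for $\alpha_i$, the commutativity of the Huq triangle and the maximality chase are immediate from the defining equations of $Z_1,Z_0$, exactly as the paper's ``by a direct checking'' indicates.
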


\begin{proof}
	Consider the diagram
	
	\begin{equation*}
		\xymatrix@R=45pt@C=45pt{
			(C_1,C_0,\partial ) \ar@{=}[dr]_-{} \ar[r]^-{} & (C_1 \times Z_1,C_0 \times Z_0,\partial \times \partial \left\vert {}\right. )  \ar[d]^-{(\alpha_1,\alpha_0) } & Z(C_1,C_0,\partial | ) \ar[l]_-{} \ar[dl]^-{(\beta_1,\beta_0) } \\
			& (C_1,C_0,\partial )  &  }
	\end{equation*}
	
	\noindent  define $\alpha_1 :C_1 \times Z_1 \longrightarrow C_1$, $\alpha_0 :C_0 \times Z_0 \longrightarrow C_0$ by $
	\alpha_1 (c_1 , z_1) = c_1+z_1$, $\alpha_0 (c_0 , z_0) = c_0+z_0$, respectively,   $(\beta_1,\beta_0) $ as an inclusion and the others by usual way. Then the diagram is commutative from which we get that $Z(C_1,C_0,\partial | )$ is a central subobject.
	
		For any central object $(H_1,H_0,\partial | )$ of $(C_1,C_0,\partial )$.
		Then there exist a monomorphism $(\mu _1, \mu_0) :(H_1,H_0,\partial | ) \longrightarrow (C_1,C_0,\partial )$ and a homomorphism $(\sigma _1, \sigma_0) :(C_1 \times H_1,C_0 \times H_0,\partial \times \partial \left\vert {}\right. ) \longrightarrow (C_1,C_0,\partial )$ which makes commutative the diagram
		
    	\begin{equation*}
			\xymatrix@R=45pt@C=45pt{
				(C_1,C_0,\partial ) \ar@{=}[dr]_-{} \ar[r]^-{} & (C_1 \times H_1,C_0 \times H_0,\partial \times \partial \left\vert {}\right. )  \ar[d]^-{(\sigma_1,\sigma_0) } & (H_1,H_0,\partial | ) \ar[l]_-{} \ar[dl]^-{(\mu_1,\mu_0) } \\
				& (C_1,C_0,\partial )  &  }
		\end{equation*}
		By a direct checking we have $(\mu_1,\mu_0)(H_1,H_0,\partial | ) \subseteq Z(C_1,C_0,\partial | ) $, which means that $Z(C_1,C_0,\partial | ) $ is the maximal central subobject of $(C_1,C_0,\partial ) $, as required.
		\end{proof}
	
	\begin{corollary}
		Definition \ref{Center} is equivalent to the definition, in the sense of \cite{hug1}.
	\end{corollary}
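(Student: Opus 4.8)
The plan is to chain together two facts already at hand: Proposition \ref{subobject}, which says $Z(C_1,C_0,\partial|)$ is the maximal central subobject of $(C_1,C_0,\partial)$ in the sense of Huq's Definition \ref{mono}, and the content of Definition \ref{Center}, which declares $Z(C_1,C_0,\partial)$ to \emph{be} that same subobject $(Z_1,Z_0,\partial)$. Since Definition \ref{mono} characterizes the Huq-center of an object as the maximal central subobject relative to the order on monomorphisms, and Proposition \ref{subobject} establishes that $(Z_1,Z_0,\partial|)$ occupies exactly that position inside $(C_1,C_0,\partial)$, the two descriptions of ``the center'' name the same crossed submodule. Hence Definition \ref{Center} and the Huq definition of \cite{hug1} agree.

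Concretely I would write: by Proposition \ref{subobject}, $Z(C_1,C_0,\partial|)$ is a central subobject of $(C_1,C_0,\partial)$ and it contains (the image of) every central subobject; by Definition \ref{mono} the center of $(C_1,C_0,\partial)$ in the sense of \cite{hug1} is precisely the maximal such subobject with respect to the order on monomorphisms into $(C_1,C_0,\partial)$. Maximal elements of a poset that both dominate all others are unique up to the relevant isomorphism, so the Huq-center is isomorphic (as a subobject) to $Z(C_1,C_0,\partial|) = (Z_1,Z_0,\partial)$, which is Definition \ref{Center}. I would phrase this as a short ``immediate consequence of Proposition \ref{subobject} and Definition \ref{mono}'' argument rather than redoing any of the pointwise verifications.

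There is essentially no obstacle here, because all the real work has been discharged in Proposition \ref{subobject}: constructing the map $(\alpha_1,\alpha_0)$ witnessing centrality of $(Z_1,Z_0,\partial|)$, and the ``direct checking'' that any central $(H_1,H_0,\partial|)$ lands inside it. The only minor point worth a sentence is uniqueness of a maximum in the poset of subobjects — i.e.\ that being a maximal central subobject pins the center down up to isomorphism of subobjects — but this is formal and already implicit in the way Huq's Definition \ref{mono} is stated. So the corollary is just the act of reading Proposition \ref{subobject} through the lens of Definition \ref{mono}.
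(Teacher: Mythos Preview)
Your proposal is correct and takes exactly the same approach as the paper: the paper's proof is simply ``Follows from Definition \ref{mono} and Proposition \ref{subobject},'' which is precisely the chaining you describe. Your version is more explicit about the uniqueness-of-maximum point, but the underlying argument is identical.
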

	
	\begin{proof}
		Follows from Definitions \ref{mono} and Proposition \ref
		{subobject}.
	\end{proof}

	\begin{definition}
		A singular (pre)crossed module in $\mathbb{C}$ is the crossed module coincide with its center.	
	\end{definition}

    \subsection{The Commutator of a (Pre)Crossed Module in MCI}
In this subsection we introduce the notion of commutator of a precrossed module in $\mathbb{C}$
modules which recovers the Huq's commutator \cite{hug1} and relative commutator \cite{everart}, as well.\newline
Let $(C_{1},C_{0},\partial )$ be a precrossed module. The commutator of the corresponding precat$^{1}$-object $(C_{1}\rtimes C_{0},\omega _{0},\omega _{1})$ is the ideal $[(C_{1}\rtimes C_{0},\omega
_{0},\omega _{1}),(C_{1}\rtimes C_{0},\omega _{0},\omega _{1})]$ generated
by the set \newline
\newline
\noindent $%
\begin{array}{l}
\{(x_{1},x_{0})+(y_{1},y_{0})-(x_{1},x_{0})-(y_{1},y_{0}),\text{ }%
(x_{1},x_{0})+(0,y_{0})-(x_{1},x_{0})-(0,y_{0}), \\
(x_{1},x_{0})+(0,\partial (y_{1})+y_{0})-(x_{1},x_{0})-(0,\partial
(y_{1})+y_{0}),\text{ }(x_{1},x_{0})\ast (y_{1},y_{0}), \\(x_{1},x_{0})\ast
(0,y_{0}), 
(x_{1},x_{0})\ast (0,\partial (y_{1})+y_{0})\left\vert {}\right. \text{ }%
(x_{1},x_{0}),(y_{1},y_{0})\in C_{1}\rtimes C_{0}\text{ and }\ast \in \Omega
_{2}{}^{\prime }\}.%
\end{array}%
$\newline
\newline

The image $\mathfrak{X}([(C_{1}\rtimes C_{0},\omega _{0},\omega _{1}),(C_{1}\rtimes
C_{0},\omega _{0},\omega _{1})])$ is the object $(K_{1},K_{0},\partial |)$
where $K_{1}$ and $K_{0}$ are the ideals generated by the sets
\begin{equation*}
\{x_{0}\cdot x_{1}-x_{1},\text{ }x_{1}+y_{1}-x_{1}-y_{1},\text{ }x_{1}\ast
y_{1},x_{0}\ast x_{1}\left\vert {}\right. x_{0}\in C_{0},\text{ }%
x_{1},y_{1}\in C_{1}\}
\end{equation*}%
and
\begin{equation*}
\{x_{0}+y_{0}-x_{0}-y_{0},\text{ }x_{0}\ast y_{0}\left\vert {}\right.
x_{0},y_{0}\in C_{0}\},
\end{equation*}%
respectively.

\begin{definition}
Let $(C_1,C_0,\partial )$ be a precrossed module. Then $(K_1,K_0,\partial| )$
is called the commutator subcrossed module of $(C_1,C_0,\partial )$.
\end{definition}

If $(C_{1},C_{0},\partial )$ is a crossed module then $K_{1}$ is the set
generated by the set
\begin{equation*}
\{x_{0}\cdot x_{1}-x_{1},x_{0}\ast x_{1}\left\vert {}\right. x_{0}\in C_{0},%
\text{ }x_{1}\in C_{1}\}.
\end{equation*}

	\subsection{Central Extensions of (Pre)Crossed Modules in MCI }
	
	  Now, we introduce the central extensions of (pre)crossed modules in $\mathbb{C}$. Similar to Proposition \ref{xcenter}, the definition coincides with the notion of centrality, in the sense of \cite{JanKel}.
	\begin{definition}
	Let $(C_1,C_0,\partial_C)$ be a (pre)crossed module and $(A_1,A_0,\partial_A)$ be a singular object in $\mathbf{(P)Xmod}(\mathbb{C})$. A central extension of $(C_1,C_0,\partial_C)$ by $(A_1,A_0,\partial_A)$ is an extension
		$$	\xymatrix{(A_1,A_0,\partial_A) \ \ar@{>->}[r] & (B_1,B_0,\partial_B) \ar@{->>}[r] & (C_1,C_0,\partial_C)}  $$
	such that $(A_1,A_0,\partial_A)$ is a crossed ideal of $Z(B_1,B_0,\partial_B)$.
	\end{definition}
As a consequence, one can construct the classification of central extensions of (pre)crossed modules. see \cite{LCE,jas,PHDC,CC,CELE,PHDN,Nor}, for various cases.

	                   \bigskip

\end{document}